\documentclass[a4paper,12pt]{article}
\usepackage{times, url}
\textheight 24.5cm
\textwidth 16.3cm
\oddsidemargin -0.1in
\evensidemargin 0.in
\topmargin -1.8cm

\usepackage{graphicx,amsmath,amssymb,amsthm,amsfonts,bm,float,cite}%,refcheck
\newtheorem{theorem}{Theorem}[section]

\newtheorem{lemma}{Lemma}[section]

\newtheorem{proposition}{Proposition}[section]

%%% If you want your equations to be numbered within the sections uncomment the line below:
%\numberwithin{equation}{section}

\usepackage[none]{hyphenat}

\usepackage[center]{caption}
\captionsetup[table]{labelsep=period}
\captionsetup[figure]{labelsep=period}

%\footnote[num]{text}

%\usepackage{color, colortbl}
%\definecolor{Gray}{gray}{0.9}

% All other userpackages and declarations you may %need come here! Please, use only the ones necessary!
%====================================================================================================== 
\newcommand{\IP}[1]{\textbf{E}\left( #1 \right)}
\newcommand{\FP}[1]{\left\{ #1 \right\}}

\newcommand{\setU}[1]{\mathbb{U}\left( #1 \right)}

\newcommand{\doublepariii}[1]{\left(\!\!\!\left(#1\right)\!\!\!\right)}
\newcommand{\doubleparii}[1]{\left(\!\!\left(#1\right)\!\!\right)}
\newcommand{\doublepari}[1]{\left(\!\left(#1\right)\!\right)}

%======================================================================================================
%%%%%%%%%%%%%%%%%%%%%% license ccby block

\usepackage[bottom,symbol]{footmisc}
\DeclareFontEncoding{TS1}{}{}
\DeclareFontSubstitution{TS1}{cmr}{m}{n}
\DeclareTextSymbol{\tcrp}{TS1}{'251}
\DeclareTextSymbolDefault{\tcrp}{TS1}

%\fontsize{10}{10}\selectfont Licensee Institute of Biophysics and Biomedical Engineering, Bulgarian Academy of Sciences. 

 \pdfinfo{
/Title (Your paper title here)
/Author (Steven Brown)
/Subject (Notes on Number Theory and Discrete Mathematics, xxxx, Volume xx, Number x, Pages xxx-xxx)
/Keywords (Notes on Number Theory and Discrete Mathematics, xxxx, Volume xx, Number x, Pages xxx-xxx)
}

\begin{document}
\setcounter{page}{1}

\thispagestyle{empty} 

%\noindent \textbf{Notes on Number Theory and Discrete
%Mathematics \newline
%Print ISSN 1310--5132, Online ISSN 2367--8275 \newline
%XXXX, Volume XX, Number X, XXX--XXX \newline
%DOI: 10.7546/nntdm.XXXX}  
%\vspace{11mm}

\begin{center}
{\LARGE \bf A proof of Spence's formula using the \\[4mm] reciprocity law for Dedekind sums} % Sentence case is when the first letter of the title gets capitalized, along with any proper nouns. If your title is 2+ lines use \\[4mm] Please avoid formulas in the title.
\vspace{8mm}

{\Large \bf Steven Brown$^1$}
\vspace{3mm}
% All authors have their affiliations separately given affiliation, even if they share an affiliation.

$^1$ 48 rue Pottier, 78150 Le Chesnay Rocquencourt \\
e-mail: \url{steven.brown.math@gmail.com}
\end{center}
\vspace{9mm}  %+++\blfootnote{\licen}

%\noindent
%\noindent {\bf Received:}  DD Month XXXX \hfill {\bf Revised:} DD Month XXXX \\
%{\bf Accepted:} DD Month XXXX  \hfill {\bf Online First:} DD Month XXXX \\[4mm] % Please leave these to the Journal's Technical Editor
{\bf Abstract:} In 1963, Edward Spence published a proof of the following \\

\textit{With $\phi$ being Euler totient function, if $n>1$ is an integer, and if 
\begin{equation*}
0<a_1<\cdots<a_{\phi(n)}<n,
\end{equation*}
are the positive integers less than $n$, coprime with $n$, then
\begin{equation*}
\sum_{j=1}^{\phi(n)}ja_j = \frac{\phi(n)}{24}\left(8n\phi(n)+6n+2\phi(m)(-1)^{\omega(m)}-2^{\omega(m)}\right),
\end{equation*}
where $m$ is the square-free part of $n$ and $\omega(m)$ is the number of prime factors of $m$.} \\

Spence's proof relies on an ingenious observation considering Nagell's totient function. 
Later in 1971, Lucien Van Hamme provided an alternative proof of the result using Fourier analysis and previous work from Hubert Delange in 1968. In this paper I propose another proof of the formula using the reciprocity law for Dedekind sums. If the formula is of interest on its own, it also plays a role in the analysis of the distribution of the $a_j$ as suggested by the work from Hubert Delange. \\

{\bf Keywords:} Spence formula, Dedekind sums, Arithmetical functions \\ % Capitalize only first letters of keywords, separate them with commas and finish with  stop.
{\bf 2020 Mathematics Subject Classification:} 11A99, 11F20, 11A25
\vspace{5mm}

%=====================================================================================
% Introduction
%=====================================================================================
\section{Introduction}

\subsection{The claim of interest}

\begin{theorem} Let $n>1$ be an integer, and $m$ is its square-free part. Let's write $a_i$ for $i$ ranging from 1 to $\phi(n)$ the positive integers, less than $n$ and coprime with $n$, in ascending order, then
\begin{equation}\label{eq:Spence_formula}
\sum_{j=1}^{\phi(n)}ja_j= \frac{\phi(n)}{24}\left(8n\phi(n)+6n+2\phi(m){(-1)}^{\omega(m)}-2^{\omega(m)} \right).
\end{equation}
\end{theorem}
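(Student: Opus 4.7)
The plan is to use Möbius inversion to convert the sum into one involving fractional parts, and then to evaluate the fractional-part sum by means of Dedekind sums and their reciprocity law. Let $S_n=\{a:1\le a<n,\ \gcd(a,n)=1\}$. Since $j$ counts the elements of $S_n$ lying in $[1,a_j]$, Möbius inversion gives $j=\sum_{d\mid n}\mu(d)\lfloor a_j/d\rfloor$; combined with $\lfloor x\rfloor=x-\{x\}$ this yields
\begin{equation*}
\sum_{j=1}^{\phi(n)}j\,a_j \;=\; \frac{\phi(n)}{n}\sum_{a\in S_n}a^2 \;-\; X,\qquad X \;:=\; \sum_{d\mid n}\mu(d)\sum_{a\in S_n}a\,\{a/d\}.
\end{equation*}
The sum of squares is classical: a standard Möbius computation, using $\prod_{p\mid n}(1-p)=(-1)^{\omega(m)}\phi(m)$, gives $\sum_{a\in S_n}a^2 = \frac{n^2\phi(n)}{3}+\frac{n(-1)^{\omega(m)}\phi(m)}{6}$. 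All the work is thus in evaluating $X$.

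To bring Dedekind sums into play, I would apply Möbius to $[\gcd(a,n)=1]$ a second time and substitute $a=eb$, which produces
\begin{equation*}
X \;=\; \sum_{d,e\mid m}\mu(d)\mu(e)\,e\sum_{b=1}^{n/e-1}b\,\{eb/d\}.
\end{equation*}
Setting $g=\gcd(d,e)$, $d'=d/g$, $e'=e/g$, the factor $\{eb/d\}=\{e'b/d'\}$ has period $d'$ in $b$; because $\mathrm{lcm}(d,e)\mid n$, the interval $[1,n/e{-}1]$ fits an integer number $K=n/\mathrm{lcm}(d,e)$ of full periods. Partitioning $b$ by its residue modulo $d'$ extracts the Dedekind sum via
\begin{equation*}
\sum_{r=1}^{d'-1}r\,\{e'r/d'\} \;=\; d'\,s(e',d') + \tfrac{d'(d'-1)}{4},
\end{equation*}
(well-defined since $\gcd(e',d')=1$), so the inner sum reduces to an explicit polynomial in $K$ and $d'$ plus the contribution $Kd'\,s(e',d')$.

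The decisive step is to symmetrize the double sum under $(d,e)\leftrightarrow(e,d)$, which pairs $s(e',d')$ with $s(d',e')$. The reciprocity law
\begin{equation*}
s(e',d')+s(d',e') \;=\; -\tfrac{1}{4}+\tfrac{e'^2+d'^2+1}{12\,e'd'} \;=\; -\tfrac{1}{4}+\tfrac{d^2+e^2+g^2}{12\,de}
\end{equation*}
then eliminates Dedekind sums entirely, leaving only sums of the shape $\sum_{d,e\mid m}\mu(d)\mu(e)\,F(d,e,\gcd(d,e))$. Each of these factors into an Euler product over the primes $p\mid m$: the non-trivial products evaluate to $\phi(m)/m$, $(-1)^{\omega(m)}\phi(m)^2/m$ and $2^{\omega(m)}\phi(m)/m$, while $\sum_{d\mid m}\mu(d)=0$ cancels the constant $-\tfrac14$ from reciprocity. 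Using $n\phi(m)/m=\phi(n)$ one reads off
\begin{equation*}
X \;=\; -\frac{n\phi(n)}{4}+\frac{\phi(n)\phi(m)(-1)^{\omega(m)}}{12}+\frac{\phi(n)\,2^{\omega(m)}}{24},
\end{equation*}
which substituted above reproduces \eqref{eq:Spence_formula}. The main obstacle is the $(d,e)$-symmetrization and the verification that the three resulting Euler products match precisely the three non-polynomial terms of Spence's formula; once reciprocity has done its work, what remains is routine multiplicative bookkeeping.
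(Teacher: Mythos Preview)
Your plan is correct and is essentially the paper's own proof: both rewrite $j=\sum_{d\mid n}\mu(d)\lfloor a_j/d\rfloor$, split off the sum of squares, apply M\"obius a second time to obtain a double divisor sum, recognise Dedekind sums, symmetrise, invoke reciprocity, and finish with the same three multiplicative evaluations (including the identity $\sum_{d,e\mid n}\mu(d)\mu(e)\gcd(d,e)^2/(de)=2^{\omega(n)}\phi(n)/n$). The only cosmetic difference is that the paper passes to the sawtooth function $((x))$ first, so the Dedekind sums $s(n/d_1,n/d_2)$ appear directly without your residue-class partition, and it parametrises by $n/d_1,n/d_2$ rather than $d,e$.
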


This formula has first been found and proven by Spence in 1963 in \cite{spence1963formulae}. It was subsequently proved by Lucien Van Hamme in 1971 in \cite{van1971generalisation}. In this paper we propose another proof of the formula using the reciprocity law for Dedekind sums.

\section{Notations and definitions}

For any real number $x$, we note $\IP{x}$ its integer part and $\FP{x}$ its fractional part. Unless specified otherwise, the letter $n$ denotes an integer greater than 1, and the letter $m$ denotes its square-free part. We note $\phi(n)$ Euler totient function, $\omega(n)$ the function that counts the number of distinct prime factors of $n$, and $\mu(n)$ Möbius function. Also we denote by $\mathbb{P}$ the set of prime numbers. The set of the $\phi(n)$ positive integers coprime with $n$ and less than $n$ is noted

\begin{equation}
\setU{n} := \left\{ a_1, a_2,\ldots, a_{\phi(n)} \right\}.
\label{eq:def_set_U_n}
\end{equation}

For positive integers $a$ and $b$ we use the notation $s(b,a)$ as in Rademacher's book \cite{rademacher1972dedekind} to denote the classical Dedekind sum\footnote{In his book, Rademacher assumes $a$ and $b$ to be relatively prime. In the context of this paper, it is convenient to remove this condition and this poses no issue for $s(a,b)$ to be evaluated. The coprimality condition will be checked whenever required, for example in the case of the application of Dedekind's reciprocity law.}

\begin{equation}
s(b,a):=\sum_{k=1}^a \doublepariii{\frac{kb}{a}}\doublepariii{\frac{k}{a}}.
\label{eq:def_Dedekind_sum}
\end{equation}

with the symbol $\doublepari{x}$ defined by

\begin{equation}\label{eq:def_dp_x}
\doublepari{x} := \left\{
    \begin{array}{ll}
        \FP{x}-\frac{1}{2} & \mbox{if  } x\notin\mathbb{Z} \\
				0                 & \mbox{if  } x\in\mathbb{Z}
    \end{array}
\right. .
\end{equation}

Let's introduce the functions

\begin{equation}
\theta_n(x):=\sum_{d\mid n}\mu(d)\IP{\frac{x}{d}},
\label{eq:def_theta_n_x}
\end{equation}

and

\begin{equation}
\nu_n(x):=\sum_{d\mid n}\mu(d)\FP{\frac{x}{d}}.
\label{eq:def_nu_n_x}
\end{equation}

\section{Basic properties}

We provide below some basic propositions that can be skipped or referred to as and when needed.

\begin{proposition} For any integer $n\geq 1$, and for any real number $x$,
\begin{equation}\label{eq:theta_n_x_plus_nu_n_x}
\theta_n(x)+\nu_n(x) = x\frac{\phi(n)}{n}.
\end{equation}
\end{proposition}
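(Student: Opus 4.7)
The plan is to combine the two sums and use the elementary decomposition $y = \lfloor y \rfloor + \{y\}$ term by term. Specifically, I would write
\begin{equation*}
\theta_n(x) + \nu_n(x) = \sum_{d \mid n} \mu(d)\left( \IP{\tfrac{x}{d}} + \FP{\tfrac{x}{d}} \right) = \sum_{d \mid n} \mu(d) \cdot \frac{x}{d} = x \sum_{d \mid n} \frac{\mu(d)}{d},
\end{equation*}
where the decomposition into integer and fractional parts holds for every real argument (including the case $x/d \in \mathbb{Z}$, where both terms $\{x/d\}$ and $\doublepari{x/d}$ collapse to $0$, but this does not affect the sum since we are adding $\lfloor x/d \rfloor$ and $\{x/d\}$ rather than $\doublepari{x/d}$).

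The only remaining step is to identify $\sum_{d \mid n} \mu(d)/d$ with $\phi(n)/n$. This is a classical identity obtained from Möbius inversion of $\phi = \mathrm{id} \ast \mu$, or equivalently from the Euler product
\begin{equation*}
\sum_{d \mid n} \frac{\mu(d)}{d} = \prod_{p \mid n} \left(1 - \frac{1}{p}\right) = \frac{\phi(n)}{n}.
\end{equation*}
Substituting this into the previous display yields the claim.

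I do not anticipate any real obstacle here; the statement is essentially a bookkeeping identity that packages together the definition of $\theta_n$ and $\nu_n$ with the well-known Möbius expression for $\phi(n)/n$. The only point worth a brief remark is to make sure the reader sees that $\lfloor x/d \rfloor + \{x/d\} = x/d$ without any caveat on whether $x/d$ is an integer, so that the manipulation is valid uniformly in $x$ and $d$.
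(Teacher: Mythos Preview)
Your proof is correct and follows essentially the same approach as the paper: combine the two sums via $\IP{x/d}+\FP{x/d}=x/d$, factor out $x$, and invoke the classical identity $\sum_{d\mid n}\mu(d)/d=\phi(n)/n$ (which the paper cites from Apostol). There is no substantive difference.
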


\begin{proof} For any real number $x$ and for any divisor $d$ of $n$ there is $\frac{x}{d}=\IP{\frac{x}{d}} + \FP{\frac{x}{d}}$. The formula appears after multiplication by $\mu(d)$, sommation over all divisors $d$ of $n$ and application of Theorem 2.3 page 26 in \cite{apostol1976introduction}.
\end{proof}

\begin{proposition}\label{prop:transformation_sum_Un_any_f} For any positve integer $n$, and for any function $f$ defined on positive integers,
\begin{equation}
\sum_{a \in \setU{n}} f(a) = \sum_{d\mid n} \mu(d) \sum_{k=1}^{n/d} f(dk).
\label{eq:transformation_sum_Un_any_f}
\end{equation}
\end{proposition}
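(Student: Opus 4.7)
The plan is to use the standard Möbius characterization of the coprimality indicator, namely
\[
[\gcd(a,n)=1] \;=\; \sum_{d\mid \gcd(a,n)} \mu(d),
\]
which is a consequence of Theorem~2.1 in \cite{apostol1976introduction} (the relation $\sum_{d\mid m}\mu(d)=[m=1]$). This lets me rewrite the left-hand side as an unrestricted sum of $f(a)$ over $a\in\{1,\dots,n\}$ weighted by this indicator, and then swap the order of summation to bring the divisor sum outside.

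More concretely, first I would extend the summation to all $a$ from $1$ to $n$:
\[
\sum_{a\in\setU{n}} f(a) \;=\; \sum_{a=1}^{n} f(a)\,[\gcd(a,n)=1] \;=\; \sum_{a=1}^{n} f(a) \sum_{d\mid \gcd(a,n)} \mu(d).
\]
(Including $a=n$ causes no harm: since $n>1$ the identity $\sum_{d\mid n}\mu(d)=0$ kills that term automatically.) Then I would exchange the two sums, using that the condition $d\mid\gcd(a,n)$ is equivalent to the conjunction $d\mid n$ and $d\mid a$:
\[
\sum_{a=1}^{n} f(a) \sum_{d\mid \gcd(a,n)} \mu(d)
\;=\; \sum_{d\mid n} \mu(d) \sum_{\substack{1\le a\le n\\ d\mid a}} f(a).
\]
Finally, writing the multiples of $d$ in $\{1,\dots,n\}$ as $a=dk$ with $1\le k\le n/d$ yields the claimed formula. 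The statement holds for any function $f$ on positive integers because no property of $f$ is used.

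There is no real obstacle here: the result is a clean inclusion–exclusion rearrangement, and the only subtle point worth flagging is that the inclusion of $a=n$ in the extended sum is harmless precisely because $n>1$; for $n=1$ the formula still holds trivially with $\setU{1}=\{1\}$ and the single divisor $d=1$.
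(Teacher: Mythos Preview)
Your proof is correct and follows essentially the same route as the paper: both invoke the M\"obius identity $[\gcd(a,n)=1]=\sum_{d\mid\gcd(a,n)}\mu(d)$ from Apostol, extend the sum to all $a\le n$, and then swap the order of summation using that $d\mid\gcd(a,n)$ is equivalent to $d\mid n$ together with $d\mid a$. Your added remark about the harmless inclusion of $a=n$ is a nice touch the paper leaves implicit.
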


\begin{proof} Using Theorem 2.1 page 25 in \cite{apostol1976introduction} we have for any integer $k$
\begin{equation*}
\mathbf{1}_{\gcd(k,n)=1}=\sum_{d\mid \gcd(k,n)}\mu(d),
\end{equation*}
hence
\begin{equation*}
\sum_{a \in \setU{n}} f(a) = \sum_{k=1}^n f(k) \sum_{d\mid \gcd(k,n)}\mu(d).
\end{equation*}
Now, we wish to invert the sums. If $d$ is a fixed divisor of $n$ then $d$ divides $\gcd(k,n)$ if and only if $k$ is a multiple of $d$ which provides equation \ref{eq:transformation_sum_Un_any_f}.
\end{proof}

\begin{lemma}\label{lemma:sum_dpx_plus_dpmx_equals_zero} For any $x \in \mathbb{R}$,
\begin{equation}
\doublepari{x}+\doublepari{-x}=0.
\end{equation}
\end{lemma}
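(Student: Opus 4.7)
The plan is to split into the two cases arising from the piecewise definition of $\doublepari{\cdot}$ in equation \eqref{eq:def_dp_x}. The key observation is that $x \in \mathbb{Z}$ if and only if $-x \in \mathbb{Z}$, so the two summands always fall into the same branch of the definition, which makes the case analysis clean.

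First I would dispatch the easy case: if $x \in \mathbb{Z}$, then $-x \in \mathbb{Z}$ as well, so by definition $\doublepari{x} = 0$ and $\doublepari{-x} = 0$, and the identity holds trivially.

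For the remaining case $x \notin \mathbb{Z}$, the main computation is to establish the fractional-part identity
\begin{equation*}
\FP{x} + \FP{-x} = 1.
\end{equation*}
This follows from writing $x = \IP{x} + \FP{x}$ with $0 < \FP{x} < 1$ (since $x$ is not an integer), which gives $-x = -\IP{x} - 1 + (1 - \FP{x})$ with $0 < 1 - \FP{x} < 1$, so by uniqueness of the decomposition $\FP{-x} = 1 - \FP{x}$. Substituting into the definition,
\begin{equation*}
\doublepari{x} + \doublepari{-x} = \bigl(\FP{x} - \tfrac{1}{2}\bigr) + \bigl(\FP{-x} - \tfrac{1}{2}\bigr) = \FP{x} + \FP{-x} - 1 = 0.
\end{equation*}

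There is no real obstacle here; the only subtlety worth being careful about is not forgetting the integer case, since the naive formula $\FP{-x} = 1 - \FP{x}$ fails precisely when $x \in \mathbb{Z}$. The piecewise definition of $\doublepari{\cdot}$ is designed exactly to absorb that boundary case and produce the odd symmetry stated in the lemma.
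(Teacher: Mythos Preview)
Your proof is correct and follows essentially the same approach as the paper: both split into the integer and non-integer cases and, in the latter, use $\FP{x}+\FP{-x}=1$ to conclude. The only cosmetic difference is that the paper observes $\FP{x}+\FP{-x}$ is an integer lying in $(0,2)$, whereas you exhibit the decomposition of $-x$ directly; these are equivalent one-line arguments.
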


\begin{proof} If $x$ is an integer, so is $-x$ and the identity is obviously satisfied. 
If $x$ is not an integer, then $\FP{x}+\FP{-x}=-\IP{x}-\IP{-x}$ is an integer which must equal 1 as it lies in $]0,2[$, then
\begin{equation*}
\doublepari{x}+\doublepari{-x}=\FP{x}-\frac{1}{2}+\FP{-x}-\frac{1}{2}=0.
\end{equation*}
\end{proof}

\begin{proposition}\label{prop:sum_a_in_Un_dp_a_over_d_equal_zero} Let $n>1$ be an integer, and $d$ a divisor of $n$,
\begin{equation}
\sum_{a \in \setU{n}} \doubleparii{\frac{a}{d}} = 0.
\end{equation}
\end{proposition}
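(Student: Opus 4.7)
The plan is to exploit the involution $a \mapsto n-a$ on $\setU{n}$, combined with Lemma \ref{lemma:sum_dpx_plus_dpmx_equals_zero}.

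First I would dispose of the trivial case $d=1$: since each $a$ is an integer, $\doubleparii{a/1}=0$ by definition, and the sum vanishes. So from now on assume $d>1$. Next, observe that the map $a\mapsto n-a$ is a bijection of $\setU{n}$ onto itself, since $\gcd(n-a,n)=\gcd(a,n)$. The key computation is then that, because $d\mid n$, the quantity $n/d$ is an integer, so
\begin{equation*}
\doubleparii{\frac{n-a}{d}} = \doubleparii{\frac{n}{d}-\frac{a}{d}} = \doubleparii{-\frac{a}{d}},
\end{equation*}
where the last equality uses that $\dop{\cdot}$ depends only on the fractional part of its argument when the argument is not an integer, and equals $0$ when it is. Applying Lemma \ref{lemma:sum_dpx_plus_dpmx_equals_zero}, the contributions of $a$ and $n-a$ to the sum cancel.

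It remains to check that this pairing has no uncanceled fixed point. A fixed point would satisfy $a=n-a$, i.e.\ $n=2a$, so $a=n/2$, and $a\in\setU{n}$ forces $\gcd(n/2,n)=1$, hence $n=2$ and $a=1$. In that case $d\in\{1,2\}$, and $\doubleparii{1/d}=0$ in either case, so the fixed-point term also vanishes. The main obstacle in this argument is really just this bookkeeping around the fixed point and the distinction between the integer and non-integer cases of $a/d$; once those are handled carefully, the result is an immediate consequence of the involution and Lemma \ref{lemma:sum_dpx_plus_dpmx_equals_zero}.
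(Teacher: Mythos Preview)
Your proof is correct and follows essentially the same idea as the paper: use the involution $a\mapsto n-a$ on $\setU{n}$ together with Lemma~\ref{lemma:sum_dpx_plus_dpmx_equals_zero}. The paper streamlines the argument slightly by writing $2S=\sum_{a\in\setU{n}}\bigl(\doubleparii{a/d}+\doubleparii{-a/d}\bigr)$ via reindexing rather than pairing, which sidesteps the fixed-point bookkeeping entirely; your more explicit case analysis is equally valid.
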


\begin{proof} Let $S=\sum_{a \in \setU{n}} \doublepari{\frac{a}{d}}$. Since $a\mapsto n-a$ is a bijection of $\setU{n}$, we easily get to
\begin{equation*}
2S=\sum_{a \in \setU{n}} \left( \doubleparii{\frac{a}{d}} + \doubleparii{-\frac{a}{d}} \right).
\end{equation*}
Considering lemma \ref{lemma:sum_dpx_plus_dpmx_equals_zero}, we conclude $S=0$ as the summands are all equal to 0.
\end{proof}

\begin{proposition}\label{prop:sum_j1bm1_dpjaoverb_equal_zero} Let $a$ and $b$ be positive integers,
\begin{equation}
\sum_{j=1}^{b-1} \doubleparii{j\frac{a}{b}} = 0.
\end{equation}
\end{proposition}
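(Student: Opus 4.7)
The plan is to exploit the reflection symmetry of $\doublepari{\cdot}$ from Lemma \ref{lemma:sum_dpx_plus_dpmx_equals_zero} by substituting $j \mapsto b-j$ in the summation index, exactly in the spirit of the previous proposition.

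First I would record the elementary fact that $\doublepari{\cdot}$ is invariant under integer translations, that is, for any integer $N$ one has $\doubleparii{x + N} = \doubleparii{x}$. This follows immediately from the definition \eqref{eq:def_dp_x}: if $x \in \mathbb{Z}$ then both sides are $0$; if $x \notin \mathbb{Z}$ then $x + N \notin \mathbb{Z}$ and $\FP{x + N} = \FP{x}$, so the two values $\FP{\cdot} - \tfrac{1}{2}$ coincide.

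Next I would let $S = \sum_{j=1}^{b-1}\doubleparii{j\tfrac{a}{b}}$ and apply the change of variable $j \leftarrow b - j$, which is a bijection of $\{1, \ldots, b-1\}$ onto itself. This gives
\begin{equation*}
S = \sum_{j=1}^{b-1} \doubleparii{(b-j)\tfrac{a}{b}} = \sum_{j=1}^{b-1} \doubleparii{a - j\tfrac{a}{b}} = \sum_{j=1}^{b-1} \doubleparii{-j\tfrac{a}{b}},
\end{equation*}
where the last equality uses the integer-translation invariance noted above (with $N = a$). Adding this to the original expression for $S$ yields
\begin{equation*}
2S = \sum_{j=1}^{b-1}\left( \doubleparii{j\tfrac{a}{b}} + \doubleparii{-j\tfrac{a}{b}} \right) = 0,
\end{equation*}
by Lemma \ref{lemma:sum_dpx_plus_dpmx_equals_zero}, and therefore $S = 0$.

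There is essentially no obstacle here: the only subtlety is to handle the case where $j\tfrac{a}{b}$ is an integer (when $b \mid ja$, which certainly occurs for $j=b/\gcd(a,b)$ and its multiples), but the definition of $\doublepari{\cdot}$ treats this cleanly by returning $0$, and Lemma \ref{lemma:sum_dpx_plus_dpmx_equals_zero} is already stated for all real $x$, so the integer case needs no special treatment.
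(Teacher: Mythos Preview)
Your proof is correct and follows essentially the same approach as the paper: both reverse the summation via $j \mapsto b-j$, pair each term with its negative, and invoke Lemma~\ref{lemma:sum_dpx_plus_dpmx_equals_zero}. You merely make explicit the integer-translation invariance of $\doublepari{\cdot}$ that the paper leaves implicit.
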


\begin{proof} Let $S=\sum_{j=1}^{b-1} \doubleparii{j\frac{a}{b}} = 0$. By considering the sum in reverse order with $j\mapsto b-j$, we get to
\begin{equation*}
2S=\sum_{j=1}^{b-1} \left( \doubleparii{j\frac{a}{b}} + \doubleparii{-j\frac{a}{b}} \right).
\end{equation*}
We conclude $S=0$ from lemma \ref{lemma:sum_dpx_plus_dpmx_equals_zero}, as in proposition \ref{prop:sum_a_in_Un_dp_a_over_d_equal_zero}
\end{proof}

\begin{proposition}\label{prop:sab_equals_sacbc} For any positive integers $a$, $b$, and $c$,
\begin{equation}
s(ac,bc)=s(a,b).
\end{equation}
\end{proposition}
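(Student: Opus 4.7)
The plan is to unfold the definition of $s(ac,bc)$, reduce the summation modulo $b$ using the $1$-periodicity of $\doublepari{\cdot}$, and then collapse the resulting inner sum with the aid of a standard ``distribution relation'' for $\doublepari{\cdot}$ over arithmetic progressions with step $1/c$.

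First I would write
\begin{equation*}
s(ac,bc)=\sum_{k=1}^{bc}\doublepariii{\frac{kac}{bc}}\doublepariii{\frac{k}{bc}}
       =\sum_{k=1}^{bc}\doublepariii{\frac{ka}{b}}\doublepariii{\frac{k}{bc}},
\end{equation*}
and parametrize $k=\ell+jb$ with $\ell\in\{1,\dots,b\}$ and $j\in\{0,\dots,c-1\}$. Because $ja\in\mathbb{Z}$, the first factor depends only on $\ell$, so
\begin{equation*}
s(ac,bc)=\sum_{\ell=1}^{b}\doublepariii{\frac{\ell a}{b}}\sum_{j=0}^{c-1}\doublepariii{\frac{\ell}{bc}+\frac{j}{c}}.
\end{equation*}

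The crux of the proof is the auxiliary identity
\begin{equation*}
\sum_{j=0}^{c-1}\doublepariii{x+\frac{j}{c}}=\doublepariii{cx}\qquad(x\in\mathbb{R}),
\end{equation*}
which I would establish directly from the definition \eqref{eq:def_dp_x}. When $cx\notin\mathbb{Z}$ none of the shifted arguments is an integer, and the elementary identity $\sum_{j=0}^{c-1}\FP{x+j/c}=\FP{cx}+(c-1)/2$ (obtained by writing $cx=n+t$ with $n\in\mathbb{Z}$ and $t\in[0,1)$) immediately delivers the result after subtracting $c/2$. When $cx\in\mathbb{Z}$, exactly one of the summands vanishes and the remaining $c-1$ fractional parts take every value in $\{1/c,\dots,(c-1)/c\}$, so the sum cancels to $0=\doublepariii{cx}$. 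Handling the integer case cleanly is the only subtle point; everything else is bookkeeping.

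Applying this identity with $x=\ell/(bc)$, so that $cx=\ell/b$, turns the inner sum into $\doublepariii{\ell/b}$, and since the $\ell=b$ contribution vanishes ($\doublepariii{a}=0$), we obtain
\begin{equation*}
s(ac,bc)=\sum_{\ell=1}^{b-1}\doublepariii{\frac{\ell a}{b}}\doublepariii{\frac{\ell}{b}}=\sum_{\ell=1}^{b}\doublepariii{\frac{\ell a}{b}}\doublepariii{\frac{\ell}{b}}=s(a,b),
\end{equation*}
which is the claim.
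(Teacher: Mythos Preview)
Your proof is correct and follows the same overall skeleton as the paper's: cancel the factor $c$ in the first argument, parametrize $k$ modulo $b$ so that the first factor depends only on the residue, and then collapse the inner sum over the quotient variable. The one substantive difference is the lemma used for that collapse. The paper expands $\doublepari{(bi+j)/(bc)}$ explicitly as $i/c+j/(bc)-1/2$ (legitimate because the argument lies in $(0,1)$ for $1\le j\le b-1$) and then invokes Proposition~\ref{prop:sum_j1bm1_dpjaoverb_equal_zero} twice to kill the $i/c$ and the $-1/2$ terms. You instead package the whole computation into the multiplication identity $\sum_{j=0}^{c-1}\doublepari{x+j/c}=\doublepari{cx}$ for the sawtooth function, which you verify directly. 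Your route is a bit cleaner and self-contained (it does not rely on Proposition~\ref{prop:sum_j1bm1_dpjaoverb_equal_zero}), while the paper's route is more explicit and reuses a lemma already in hand; the underlying arithmetic is the same in both cases.
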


\begin{proof} We have
\begin{equation*}
s(ac,bc) = \sum_{k=1}^{bc} \doublepariii{\frac{ka}{b}}\doublepariii{\frac{k}{bc}}.
\end{equation*}
The expression $\doubleparii{\frac{ka}{b}}\doubleparii{\frac{k}{bc}}$ is equal to 0 when $k=bc$ and when $k=0$, therefore we can take the sum from 0 to $bc-1$. We may as well write $k=bi+j$ for $0\leq i<c$, and for $0\leq j<b$, so that
\begin{equation*}
s(ac,bc) = \sum_{i=0}^{c-1}\sum_{j=0}^{b-1} \doublepariii{\frac{ja}{b}}\doublepariii{\frac{bi+j}{bc}}.
\end{equation*}
Considering that the summand is 0 when $j=0$, we can take the second sum from 1 to $b-1$. In that case $0<\frac{bi+j}{bc}<1$, and we can see that
\begin{equation*}
\doublepariii{\frac{bi+j}{bc}} = \left\{\frac{bi+j}{bc}\right\}-\frac{1}{2} = \frac{i}{c}+\frac{j}{bc}-\frac{1}{2}.
\end{equation*}
Considering lemma \ref{prop:sum_j1bm1_dpjaoverb_equal_zero}, we get to
\begin{equation*}
s(ac,bc) = \sum_{i=0}^{c-1}\sum_{j=1}^{b-1} \doublepariii{\frac{ja}{b}}\frac{j}{bc} = \sum_{j=1}^{b-1} \doublepariii{\frac{ja}{b}}\frac{j}{b}.
\end{equation*}
Considering lemma \ref{prop:sum_j1bm1_dpjaoverb_equal_zero} again to add $-\frac{1}{2}$, and given that $\frac{j}{b}=\left\{\frac{j}{b}\right\}$ when $0<j<b$, this is also
\begin{equation*}
s(ac,bc) = \sum_{j=1}^{b-1} \doublepariii{\frac{ja}{b}}\left(\left\{\frac{j}{b}\right\}-\frac{1}{2}\right) = \sum_{j=1}^{b-1} \doublepariii{\frac{ja}{b}}\doublepariii{\frac{j}{b}} = s(a,b).
\end{equation*}
\end{proof}

\begin{proposition}\label{prop:formule_fn_Delange} Let $n>1$ be an integer,
\begin{equation}
\sum_{d_1 \mid n} \sum_{d_2 \mid n} \mu(d_1) \mu(d_2) \frac{d_1d_2}{n^2}\gcd\left(\frac{n}{d_1},\frac{n}{d_2}\right)^2 = \frac{2^{\omega(n)}\phi(n)}{n}.
\label{eq:formule_fn_Delange}
\end{equation}
\end{proposition}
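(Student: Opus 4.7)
I would denote by $F(n)$ the left-hand side of equation \ref{eq:formule_fn_Delange} and by $g(n):=2^{\omega(n)}\phi(n)/n$ the right-hand side. Since $\omega$ is additive over coprime arguments, $2^{\omega}$ is multiplicative; $\phi$ and $n\mapsto 1/n$ are multiplicative as well, so $g$ is multiplicative. The natural strategy is therefore to prove that $F$ is also multiplicative and then to match $F$ and $g$ on prime powers.

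\textbf{Multiplicativity of $F$.} Suppose $n=n_1n_2$ with $\gcd(n_1,n_2)=1$. Every divisor $d$ of $n$ factors uniquely as $d=\delta\delta'$ with $\delta\mid n_1$ and $\delta'\mid n_2$; for such a factorization $\mu(d)=\mu(\delta)\mu(\delta')$ and $n/d=(n_1/\delta)(n_2/\delta')$. Because $\gcd(n_1,n_2)=1$, the gcd splits as
$$\gcd(n/d_1,n/d_2)=\gcd(n_1/\delta_1,n_1/\delta_2)\cdot\gcd(n_2/\delta_1',n_2/\delta_2'),$$
and likewise $d_1d_2/n^2=(\delta_1\delta_2/n_1^2)(\delta_1'\delta_2'/n_2^2)$. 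The whole summand therefore separates into a product of a term depending only on $(\delta_1,\delta_2)$ and a term depending only on $(\delta_1',\delta_2')$, from which $F(n_1n_2)=F(n_1)F(n_2)$ follows immediately.

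\textbf{Evaluation at prime powers.} For $n=p^k$ only squarefree divisors contribute, so $d_1,d_2$ range over $\{1,p\}$. I would simply enumerate the four cases: $(d_1,d_2)=(1,1)$ gives $(p^k)^2/p^{2k}=1$; each of $(1,p)$ and $(p,1)$ gives $-p\cdot p^{2k-2}/p^{2k}=-1/p$; and $(p,p)$ gives $p^2\cdot p^{2k-2}/p^{2k}=1$. Summing yields $F(p^k)=2-2/p=2(p-1)/p$, which equals $g(p^k)=2\phi(p^k)/p^k=2(p-1)/p$. Combined with the multiplicativity of both sides, this proves the identity.

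\textbf{Main obstacle.} There is no real conceptual difficulty here; the only point that needs a touch of care is the splitting of $\gcd(n/d_1,n/d_2)^2$ in the multiplicativity argument, where one must exploit $\gcd(n_1,n_2)=1$ to guarantee that the gcd of products factors as the product of gcds and that the square distributes over this factorization. Once that is checked the proof is essentially a direct computation.
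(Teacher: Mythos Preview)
Your proof is correct and follows essentially the same strategy as the paper: show that the double sum defines a multiplicative function of $n$ and then evaluate it on prime powers. The only cosmetic difference is that the paper first rewrites the summand via the identity $d_1 d_2\,\gcd(n/d_1,n/d_2)=n\,\gcd(d_1,d_2)$ (so that the sum becomes $\sum_{d_1,d_2\mid n}\mu(d_1)\mu(d_2)\gcd(d_1,d_2)^2/(d_1d_2)$) before checking multiplicativity, whereas you verify multiplicativity directly on the original expression; both routes yield the same four-term computation $F(p^k)=2(1-1/p)$.
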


\begin{proof} The formula was proven\footnote{As the reference is in French and not easily accessible I also provide the proof here.} by Hubert Delange in \cite{delange1968distribution} pages 82 and 83. Let $f(n)$ be the left hand side of equation \ref{eq:formule_fn_Delange}. If $d_1$ and $d_2$ are two divisors of $n$, then
\begin{equation}
d_1 d_2 \gcd\left(\frac{n}{d_1},\frac{n}{d_2}\right) = n \gcd(d_1,d_2).
\label{eq:an_gcd_identity}
\end{equation}
This can be seen directly by counting the valuation of any prime $p$ on both sides of the equation. Let's note $A$ the valuation of $p$ on the left hand side of equation \ref{eq:an_gcd_identity},
\begin{align*}
A
&= v_p(d_1)+v_p(d_2)+\min\left(v_p(n)-v_p(d_1),v_p(n)-v_p(d_2)\right), \\
& =v_p(d_1)+v_p(d_2)+v_p(n)-\max(v_p(d_1),v_p(d_2)).
\end{align*}
If $B$ is the valuation of $p$ on the right hand side of equation \ref{eq:an_gcd_identity},
\begin{equation*}
B = v_p(n)+\min(v_p(d_1),v_p(d_2)),
\end{equation*}
Thus, for any prime $p$,
\begin{equation*}
B-A=\min(v_p(d_1),v_p(d_2))+\max(v_p(d_1),v_p(d_2))-v_p(d_1)-v_p(d_2)=0,
\end{equation*}
And we conclude that equation \ref{eq:an_gcd_identity} is valid. We now take the square of equation \ref{eq:an_gcd_identity} and reformulate $f(n)$ as follows
\begin{equation*}
f(n) =  \sum_{d_1 \mid n} \sum_{d_2 \mid n} \mu(d_1) \mu(d_2) \frac{\gcd(d_1,d_2)^2}{d_1d_2}.
\end{equation*}
Let's show that the function $f$ is multiplicative. If $a$ and $b$ are two integers relatively prime, then the divisors of $ab$ are simply all the combinations of the divisors of $a$ times the divisors of $b$,
\begin{align*}
f(ab)
&= \sum_{d_1 \mid ab} \sum_{d_2 \mid ab} \mu(d_1) \mu(d_2) \frac{\gcd(d_1,d_2)^2}{d_1d_2}, \\
&= \sum_{\substack{d_1 \mid a\\ \delta_1 \mid b}} \sum_{\substack{d_2 \mid a\\ \delta_2 \mid b}} \mu(d_1\delta_1) \mu(d_2\delta_2) \frac{\gcd(d_1\delta_1,d_2\delta_2)^2}{d_1\delta_1d_2\delta_2} ,\\
&= \left( \sum_{d_1 \mid a} \sum_{d_2 \mid a} \mu(d_1) \mu(d_2) \frac{\gcd(d_1,d_2)^2}{d_1d_2} \right) \left( \sum_{\delta_1 \mid b} \sum_{\delta_2 \mid b} \mu(\delta_1) \mu(\delta_2) \frac{\gcd(\delta_1,\delta_2)^2}{\delta_1\delta_2} \right), \\
&= f(a)f(b).
\end{align*}
Now to evaluate $f$ we only need to evaluate $f(p^\alpha)$ for any prime number $p$, and for any positive integer exponent $\alpha$,
\begin{align*}
f(p^\alpha)
&= \mu(1)^2 + 2\frac{\mu(1)\mu(p)}{p} + \mu(p)^2, \\
&= 2\left(1-\frac{1}{p}\right).
\end{align*}
Finally,
\begin{equation*}
f(n)=\prod_{\substack{p\mid n\\ p\in \mathbb{P}}}2\left(1-\frac{1}{p}\right)=\frac{2^{\omega(n)}\phi(n)}{n}.
\end{equation*}
\end{proof}

%======================================================================================================
% A DIRECT PROOF OF SPENCE FORMULA
%======================================================================================================
\section{A direct proof of Spence formula}

Let $n>1$ be an integer, and $m$ its square-free part. The function $\theta_n(x)$ defined in equation \ref{eq:def_theta_n_x} counts the number of positive integers coprime with $n$ and less than or equal to $x$ (see chapter 2, exercise 9 page 47 in \cite{apostol1976introduction}). We can therefore write\footnote{Note that this rewriting allows for the ascending order condition on the $a_i$ to be removed.}

\begin{equation}\label{eq:transform_jaj_thetanaa}
\sum_{j=1}^{\phi(n)}ja_j = \sum_{a\in \setU{n}}\theta_n(a)a.
\end{equation}

The consideration of equation \ref{eq:theta_n_x_plus_nu_n_x} implies that

\begin{equation}
\sum_{a\in \setU{n}}\theta_n(a) = \frac{\phi(n)}{n}\sum_{a\in \setU{n}}a^2  -\sum_{a\in \setU{n}}\nu_n(a)a. \label{eq:main}
\end{equation}

The formula for the sum of squares of numbers coprime with $n$ is well known and documented, see for example exercise 2.16 in \cite{apostol1976introduction}. One can also derive it directly from proposition \ref{prop:transformation_sum_Un_any_f}

\begin{equation}
\sum_{a\in \setU{n}}a^2 = \frac{\phi(n)}{6}\left( 2 n^2 + m (-1)^{\omega(m)} \right).
\label{eq:formula_sum_ainUn_a2}
\end{equation}

Therefore we only need to focus on the second term. For any $d \mid n$ with $d>1$, we have $\frac{a}{d}\notin \mathbb{Z}$, since $d$ is coprime with $a$ (because $n$ is coprime with $a$). In that case $\left\{\frac{a}{d}\right\}=\doublepari{\frac{a}{d}}+\frac{1}{2}$. When $d=1$, $\left\{\frac{a}{1}\right\}=\doublepari{\frac{a}{1}}=0$. 
Hence

\begin{equation*}
\nu_n(a) = \sum_{d\mid n}\mu(d)\left\{\frac{a}{d}\right\} = -\frac{1}{2} + \sum_{d\mid n}\mu(d)\left(\doubleparii{\frac{a}{d}}+\frac{1}{2}\right).
\end{equation*}

Then we have

\begin{equation*}
\sum_{a\in \setU{n}}\nu_n(a)a = -\frac{1}{2}\frac{n\phi(n)}{2}+\sum_{d\mid n}\mu(d)\sum_{a\in \setU{n}} a\doubleparii{\frac{a}{d}}.
\end{equation*}

Lets observe that $a\in \setU{n}$ may be rewritten as\footnote{In that case $\FP{\frac{a}{n}}=\frac{a}{n}$, and $\frac{a}{n}\notin \mathbb{Z}$}

\begin{equation*}
a = n\left(\doubleparii{\frac{a}{n}}+\frac{1}{2}\right).
\end{equation*}

After substitution and simplification by means of proposition \ref{prop:sum_a_in_Un_dp_a_over_d_equal_zero}, we get to

\begin{equation*}
\sum_{a\in \setU{n}}\nu_n(a)a = -\frac{n\phi(n)}{4}+n\sum_{d\mid n}\mu(d)\sum_{a\in \setU{n}} \doubleparii{\frac{a}{n}}\doubleparii{\frac{a}{d}}.
\end{equation*}

Now, using proposition \ref{prop:transformation_sum_Un_any_f} with $f(x)=\doublepari{\frac{x}{n}}\doublepari{\frac{x}{d}}$, and changing the name of the shadow variable $d$ to $d_1$, leads to

\begin{equation*}
\sum_{a\in \setU{n}}\nu_n(a)a = -\frac{n\phi(n)}{4} + n \sum_{d_1\mid n}\mu(d_1) \sum_{d_2\mid n}\mu(d_2)\sum_{k=1}^{n/d_2} \doubleparii{\frac{kd_2}{n}}\doubleparii{\frac{kd_2}{d_1}}.
\end{equation*}

The sum over $k$ can be expressed as the Dedekind sum $s\left(\frac{n}{d_1},\frac{n}{d_2}\right)$ and

\begin{equation}
\sum_{a\in \setU{n}}\nu_n(a)a = -\frac{n\phi(n)}{4} + n \sum_{d_1\mid n}\sum_{d_2\mid n} \mu(d_1)\mu(d_2) s\left(\frac{n}{d_1},\frac{n}{d_2}\right). \label{eq:expr_sum_ainUn_nunaa}
\end{equation}

Let's define

\begin{equation*}
S(n) := n \sum_{d_1\mid n}\sum_{d_2\mid n} \mu(d_1)\mu(d_2) s\left(\frac{n}{d_1},\frac{n}{d_2}\right).
\label{eq:def_Sn}
\end{equation*}

By symmetry with regards the variables $d_1$ and $d_2$, we have

\begin{equation*}
2S(n) = n \sum_{d_1\mid n}\sum_{d_2\mid n} \mu(d_1)\mu(d_2) \left( s\left(\frac{n}{d_1},\frac{n}{d_2}\right) + s\left(\frac{n}{d_2},\frac{n}{d_1}\right) \right).
\end{equation*}

Let $d$ be the greatest common divisor of $\frac{n}{d_1}$ and $\frac{n}{d_2}$, we have from proposition \ref{prop:sab_equals_sacbc}

\begin{equation*}
2S(n) = n \sum_{d_1 \mid n} \sum_{d_2 \mid n} \mu(d_1) \mu(d_2) \left( s\left(\frac{n}{dd_1},\frac{n}{dd_2}\right) + s\left(\frac{n}{dd_2},\frac{n}{dd_1}\right) \right).
\end{equation*}

Given that $\frac{n}{dd_1}$ and $\frac{n}{dd_2}$ are relatively prime, the Dedekind reciprocity law (see equation 4 page 3 in \cite{rademacher1972dedekind}) can be applied under the sum, and

\begin{equation*}
2S(n) = n \sum_{d_1 \mid n} \sum_{d_2 \mid n} \mu(d_1) \mu(d_2) \left( -\frac{1}{4} + \frac{1}{12}\left( \frac{d_2}{d_1} + \frac{d_1d_2}{n^2}d^2 + \frac{d_1}{d_2} \right) \right).
\end{equation*}

Now lets analyse the contribution of each term,

\begin{itemize}
	\item The term in $-\frac{1}{4}$ leads to a zero contribution since $n>1$ implies $\sum_{d \mid n}\mu(d)=0$, see Theorem 2.1 page 25 in \cite{apostol1976introduction}.
	\item The two terms in $\frac{d_1}{d_2}$ and in $\frac{d_2}{d_1}$ are equal by symetry, and are easily calculated as the product of two elementary sums (see for example \cite{apostol1976introduction})
	
\begin{equation*}
\sum_{d_1 \mid n} \sum_{d_2 \mid n} \mu(d_1) \mu(d_2) \frac{d_2}{d_1} = \left( \sum_{d_1 \mid n} \frac{\mu(d_1)}{d_1} \right)\left( \sum_{d_2 \mid n} \mu(d_2)d_2 \right) = \frac{\phi(n)}{n}(-1)^{\omega(m)}\phi(m).
\end{equation*}
	
	\item The last term in $\frac{d_1d_2}{n^2}d^2$ is calculated in proposition \ref{prop:formule_fn_Delange}
	
\begin{equation*}
\sum_{d_1 \mid n} \sum_{d_2 \mid n} \mu(d_1) \mu(d_2) \frac{d_1d_2}{n^2}d^2 = \frac{2^{\omega(n)}\phi(n)}{n}.
\end{equation*}
\end{itemize}

We easily get to

\begin{equation}\label{eq:formula_Sn}
S(n) = \frac{\phi(n)}{24}\left(2(-1)^{\omega(m)}\phi(m)+2^{\omega(n)}\right).
\end{equation}

Now considering equations \ref{eq:transform_jaj_thetanaa}, \ref{eq:main}, \ref{eq:formula_sum_ainUn_a2}, \ref{eq:expr_sum_ainUn_nunaa}, and \ref{eq:formula_Sn} we get after substitutions

\begin{equation*}
\sum_{j=1}^{\phi(n)}ja_j = \frac{\phi(n)}{n}\frac{\phi(n)}{6}\left( 2 n^2 + m (-1)^{\omega(m)} \right) + \frac{n\phi(n)}{4} - \frac{\phi(n)}{24}\left(2(-1)^{\omega(m)}\phi(m)+2^{\omega(n)}\right).
\end{equation*}

Considering that $\frac{\phi(n)}{n}=\frac{\phi(m)}{m}$\footnote{As a direct consequence of $\frac{\phi(n)}{n}=\prod_{p\in \mathbb{P}}\left(1-\frac{1}{p}\right)$, see Theorem 2.4 page 27 in \cite{apostol1976introduction}}, and that $\omega(n)=\omega(m)$, we see that the equation above simplifies into equation \ref{eq:Spence_formula}, which ends the proof.

\section{Conclusion}

The proof relies on three key ideas. The first idea is the reformulation of the problem via equation \ref{eq:transform_jaj_thetanaa} which enables to work on the set $\setU{n}$ and not to care about ordering the $a_j$ which is the job naturally done by the function $\theta_n$. The second key idea is to isolate the terms depending only on the fractional part function to be able ultimately to manipulate Dedekind sums. The third and main idea is to get the conditions to apply Dedekind's reciprocity law in order for the calculations to become possible via closed form formulaes. The result is not new, however the method may be applied to get formulaes for other sums of that kind, for which no closed form formulaes are known yet.

\section*{Acknowledgements} 

I would like to thank William Gasarch for his review of the paper and my wife Natallia for her continuous support.

\bibliographystyle{plain}
\bibliography{biblio}
\end{document}